\documentclass[11pt]{amsart}

\usepackage{hyperref}
\hypersetup{bookmarksdepth=3}
  
\usepackage{geometry}
\usepackage{amsmath,amssymb}
\usepackage{
mathrsfs}
\usepackage{enumerate}
\usepackage{esint}

\usepackage{tensor}

\usepackage{amsmath}

\usepackage{IEEEtrantools}

\usepackage{dirtytalk}

\usepackage{mathtools}

\usepackage{amssymb,amsmath,amsthm}


\newtheorem{theorem}{Theorem}

\newtheorem{definition}[theorem]{Definition}

\theoremstyle{remark}\newtheorem{remark}[theorem]{Remark}

\usepackage{graphicx}



\newcommand{\R}{{\mathbb R}}

\newcommand{\Z}{{\mathbb Z}}

\begin{document}

{\let\thefootnote\relax\footnote{Date: \today. 

\textcopyright 2019 by the authors. Faithful reproduction of this article, in its entirety, by any means is permitted for noncommercial purposes.}}

\title{On the global wellposedness of the Klein-Gordon equation for initial data in modulation spaces.}

\subjclass[2010]{35A01, 35A02, 35L70} 
\keywords{Klein-Gordon equation, modulation spaces, global wellposedness, high-low frequency decomposition method}

\author{L. Chaichenets}
\address{leonid chaichenets, department of mathematics, institute for analysis, karlsruhe institute of technology, 76128 karlsruhe, germany }
\email{leonid.chaichenets@kit.edu}

\author{N. Pattakos}
\address{nikolaos pattakos, department of mathematics, institute for analysis, karlsruhe institute of technology, 76128 karlsruhe, germany }
\email{nikolaos.pattakos@kit.edu}

\begin{abstract}
{We prove global wellposedness of the Klein-Gordon equation with power non\-linearity $|u|^{\alpha-1}u$, where $\alpha\in\left[1,\frac{d}{d-2}\right]$, in dimension $d\geq3$ with initial data in $M_{p, p'}^{1}(\R^d)\times M_{p,p'}(\R^d)$ for $p$ sufficiently close to $2$. The proof is an application of the high-low method described by Bourgain in \cite{BOU} where the Klein-Gordon equation is stu\-died in one dimension with cubic nonlinearity for initial data in Sobolev spaces.}
\end{abstract}

\maketitle
\pagestyle{myheadings}

\begin{section}{Introduction and Main Results}
\markboth{\normalsize L. Chaichenets and N. Pattakos }{\normalsize  Klein-Gordon equation in modulation spaces}

We are interested in the following Cauchy problem 
\begin{equation} 
\label{maineq}
\left\{
\begin{IEEEeqnarraybox}[][c]{rCl}
u_{tt}(t,x)-\Delta u(t,x)+u+|u|^{\alpha-1}u & = & 0, \\
u(0,x) & = & \phi(x),\\
u_{t}(0,x) & = & \psi(x),
\end{IEEEeqnarraybox}
\right.
\end{equation}
with initial data in modulation spaces $M_{p,q}^{s}(\R^d)\times M^{s-1}_{p,q}(\R^d)$ (see Definition \ref{defmod}). Before we state our main results let us mention that equation \eqref{maineq} plays a fundamental role in the study of the kinematics of relativistic particles (see \cite{SEG}) and has been studied extensively, see e.g. \cite{BW}, \cite{GR1}, \cite{GR2}, \cite{GR3}, \cite{KAP1}, \cite{KAP2}, \cite{MS1}, \cite{MS2}, \cite{HP} and \cite{ZZ}. This selection is far from exhaustive and we refer the interested reader to the references cited in the previously mentioned papers. The Cauchy problem \eqref{maineq} is globally wellposed in $H^{1}(\R^d)\times L^{2}(\R^d)$ for energy subcritical nonlinearities, i.e.
\begin{equation}
\label{dd}
1\leq\alpha<1+\frac{4}{d-2},
\end{equation}
and we have the following theorem from \cite[Proposition 3.2]{GV1}
\begin{theorem}
\label{GVth}
Let $t_{0}\in \R$ and $(\phi,\psi)\in H^{1}(\R^d)\times L^{2}(\R^d)$. Then, equation \eqref{maineq} has a unique global solution $u$ in $L^{\infty}_{loc}(\R, H^{1}(\R^d))$ with $u(t_{0})=\phi$, $\partial_{t}u(t_{0})=\psi$. In addition, $u\in C(\R, H^{1}(\R^d))\cap C^{1}(\R,L^{2}(\R^d))$ and satisfies the energy conservation
\begin{equation}
\label{energycon}
E(u(t), \partial_{t}u(t))=E(\phi,\psi),
\end{equation}
where the energy is defined as
\begin{equation}
\label{energydef}
E(\phi,\psi)=\|\psi\|_{L^{2}(\R^d)}^{2}+\|\phi\|_{H^{1}(\R^d)}^{2}+\frac1{\alpha+1}\ \|\phi\|_{L^{\alpha+1}(\R^d)}^{\alpha+1}.
\end{equation}
\end{theorem}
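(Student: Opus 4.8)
The plan is the classical recipe for globally solving defocusing, energy\-subcritical nonlinear wave equations: prove local wellposedness in the energy space with an existence time controlled only by the size of the data, then use the conserved energy to get a time\-uniform a priori bound and iterate. Fix $t_0$; by time translation we may take $t_0=0$.

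First I would rewrite \eqref{maineq} in Duhamel form. Setting $\langle D\rangle:=(1-\Delta)^{1/2}$ and $N(u):=|u|^{\alpha-1}u$, a function $u$ solves \eqref{maineq} with $u(0)=\phi$, $\partial_t u(0)=\psi$ if and only if
\[
u(t)=\cos\bigl(t\langle D\rangle\bigr)\phi+\frac{\sin\bigl(t\langle D\rangle\bigr)}{\langle D\rangle}\,\psi-\int_{0}^{t}\frac{\sin\bigl((t-s)\langle D\rangle\bigr)}{\langle D\rangle}\,N(u)(s)\,ds .
\]
Condition \eqref{dd} is equivalent to $\alpha+1\le\frac{2d}{d-2}$, so by Sobolev embedding $H^1(\R^d)\hookrightarrow L^{\alpha+1}(\R^d)$, hence by duality $L^{(\alpha+1)/\alpha}(\R^d)\hookrightarrow H^{-1}(\R^d)$; together with the pointwise bound $|N(a)-N(b)|\lesssim(|a|^{\alpha-1}+|b|^{\alpha-1})|a-b|$ (needed because $N$ is not $C^1$ when $\alpha<2$) this shows $u\mapsto N(u)$ is locally Lipschitz from $H^1$ to $H^{-1}$. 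Inserting this into Strichartz estimates for the Klein--Gordon propagator $e^{\pm it\langle D\rangle}$ and running a fixed point argument in an appropriate Strichartz space intersected with $C([-T,T];H^1)\cap C^1([-T,T];L^2)$ produces, for $\alpha$ in the subcritical range, a unique local solution with existence time $T=T\bigl(\|\phi\|_{H^1}+\|\psi\|_{L^2}\bigr)$ depending on the data only through its energy\-space norm; strict subcriticality is exactly what supplies a positive power of $T$ in the contraction and makes $T$ depend only on the norm. (For the narrower range $\alpha\le\frac d{d-2}$ one may dispense with Strichartz, since then $N(u)\in L^2_x$ and the contraction closes directly in $C_tH^1\cap C^1_tL^2$.) Continuous dependence on the data in these norms comes out of the same fixed point.

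Second I would prove the energy identity \eqref{energycon}. For data in $H^2\times H^1$ the local solution is regular enough to pair \eqref{maineq} with $\partial_t u$, integrate over $\R^d$, and integrate by parts, which gives $\tfrac{d}{dt}E(u(t),\partial_t u(t))=0$ on the interval of existence. For general $(\phi,\psi)\in H^1\times L^2$ one approximates by smooth data and passes to the limit in each of the three terms of $E$ using the continuous dependence together with $H^1\hookrightarrow L^{\alpha+1}$, obtaining \eqref{energycon}. Since the nonlinearity is defocusing, every term of $E$ is nonnegative, so in particular
\[
\|\partial_t u(t)\|_{L^2}^2+\|u(t)\|_{H^1}^2\ \le\ E\bigl(u(t),\partial_t u(t)\bigr)=E(\phi,\psi)
\]
on the whole interval of existence.

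Finally, globalisation: because the local existence time $T$ depends on $(\phi,\psi)$ only through $\|\phi\|_{H^1}+\|\psi\|_{L^2}$, and this quantity is bounded by a fixed function of $E(\phi,\psi)$ for all times, the standard continuation argument lets one restart the local theory at times $0,\pm T,\pm 2T,\dots$ with a step size that never degenerates, so the solution extends to all of $\R$ and lies in $L^\infty_{loc}(\R;H^1)$; the regularity $u\in C(\R;H^1)\cap C^1(\R;L^2)$ is read off from the Duhamel formula (the free evolution is strongly continuous in these norms, the Duhamel term is continuous by dominated convergence). Uniqueness in the stated class follows from a Gr\"onwall estimate on the difference of two solutions in the Duhamel formulation. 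The one genuinely delicate point is the passage from the formal energy identity to \eqref{energycon} for merely energy\-space data: the approximation must control the potential term $\|u\|_{L^{\alpha+1}}^{\alpha+1}$, and this is also where strictness in \eqref{dd} matters --- at the critical exponent $\alpha=\frac{d+2}{d-2}$ the embedding $H^1\hookrightarrow L^{\alpha+1}$ ceases to be compact, the local time would depend on the profile of the data rather than just its norm, and the iteration scheme breaks down.
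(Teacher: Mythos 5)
The paper itself offers no proof of Theorem \ref{GVth}: it is imported verbatim from Ginibre--Velo \cite{GV1} (Proposition 3.2 there), so there is no internal argument to compare yours against. Your outline reconstructs essentially the classical route that the cited reference follows: Duhamel formulation, local wellposedness in the energy space with an existence time depending only on $\|\phi\|_{H^{1}}+\|\psi\|_{L^{2}}$ (Strichartz-type estimates being needed only in the upper part of the subcritical range), energy conservation obtained first for regularized data and then by approximation, and globalisation by iterating the local theory with a non-degenerating time step. As a strategy this is sound and is the standard proof.

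One point in your sketch is a genuine gap relative to the statement as given: uniqueness is claimed in the class $L^{\infty}_{loc}(\R,H^{1}(\R^d))$, and your one-line Gr\"onwall argument does not reach it for the whole range \eqref{dd}. To close a Gr\"onwall estimate in the energy norm alone you must bound $\||u|^{\alpha-1}u-|v|^{\alpha-1}v\|_{L^{2}_{x}}$ using only the $H^{1}$ norms of $u,v$; by H\"older and $H^{1}\hookrightarrow L^{2d/(d-2)}$ this works precisely when $\alpha\le\frac{d}{d-2}$, whereas the theorem allows all $\alpha<1+\frac{4}{d-2}$. For larger $\alpha$ the contraction argument yields uniqueness only in the smaller class of solutions that additionally lie in the Strichartz space, and upgrading this to unconditional uniqueness in $L^{\infty}_{loc}H^{1}$ is a separate, nontrivial step (this is part of what the Ginibre--Velo argument supplies). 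A minor further remark: multiplying \eqref{maineq} by $\partial_{t}u$ conserves $\tfrac12\|\partial_{t}u\|_{2}^{2}+\tfrac12\|u\|_{H^{1}}^{2}+\tfrac{1}{\alpha+1}\|u\|_{L^{\alpha+1}}^{\alpha+1}$, which differs from \eqref{energydef} by constant factors; your computation is fine, but the identity should be stated for the correctly normalized energy (the discrepancy lies in the printed statement, not in your argument).
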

Let us mention that the solution to the linear Klein-Gordon equation
\begin{equation} 
\label{maineqlin}
\left\{
\begin{IEEEeqnarraybox}[][c]{rCl}
u_{tt}(t,x)-\Delta u(t,x)+u & = & F(t,x), \\
u(0,x) & = & \phi(x),\\
u_{t}(0,x) & = & \psi(x),
\end{IEEEeqnarraybox}
\right.
\end{equation}
is formally given by the formula
\begin{equation}
\label{duhlin}
u(t,x)=K'(t)\phi(x)+K(t)\psi(x)+\int_{0}^{t}K(t-\tau)F(\tau,x)\ d\tau,
\end{equation}
where 
\begin{equation}
\label{kern2}
K(t)=\frac{\sin t(I-\Delta)^{\frac12}}{(I-\Delta)^{\frac12}}\ \mbox{and}\ K'(t)=\cos t(I-\Delta)^{\frac12}.
\end{equation}
Notice that the (semi-)group given by
\begin{equation}
\label{semig}
\mathbb K(t)=
\begin{pmatrix} 
K'(t) & K(t) \\
(\Delta-Id)K(t) & K'(t)
\end{pmatrix}\qquad \forall t\in\R
\end{equation}
is isometric on $H^{1}(\R^d)\times L^{2}(\R^d)$ with the domain of its generator being 
$$\mathcal D=H^{2}(\R^d)\times H^{1}(\R^d).$$

Next, we need the following definition of admissible pairs for the Klein-Gordon equation from \cite[Definition 6.3.1]{OV}:
\begin{definition}
\label{def1}
We say that the pair $(q,r)\in[2,\infty]\times[2,\infty)$ is (wave-)admissible if 
\begin{equation}
\label{adm}
\frac1{q}+\frac{d-1}{2r}\leq\frac{d-1}4.
\end{equation}
We will denote by $q_{a}(r)=q$ the unique solution of the \emph{gap condition}
\begin{equation}
\label{gapcon}
\frac1{q}+\frac{d}{r}=\frac{d}2-1.
\end{equation}
\end{definition}

Our goal is to study \eqref{maineq} for initial data in modulation spaces, whose definition is given below.
\begin{definition}
\label{defmod}
Consider $s\in\R$ and $p,q\in[1,\infty]$. We say that a distribution $f\in S'(\R^d)$ belongs to the Banach space $M_{p,q}^{s}(\R^d)$ if 
\begin{equation}
\label{modul1}
\|f\|_{M_{p,q}^{s}}=\Big\|\Big\{\langle k\rangle^{s}\|\Box_{k}f\|_{p}\Big\}_{k\in\Z^d}\Big\|_{l^{q}(\Z^d)}<\infty,
\end{equation}
where $\langle k\rangle=(1+|k|^{2})^{\frac12}$ is the Japanese bracket and the operators $\Box_{k}$ are the isometric decomposition operators given by
\begin{equation}
\label{boxx}
\Box_{k}=\mathcal F^{-1}\sigma_{k}\mathcal F,
\end{equation}
and $\{\sigma_{k}(\cdot)=\sigma_{0}(\cdot-k)\}_{k\in\Z^d}$ is a smooth partition of unity with $\sigma_{0}$ supported in the ball $B(0,\sqrt{d})$ and equal to $1$ in the unit cube $Q=[-\frac12,\frac12)^{d}$.  
\end{definition}
These spaces first appeared in \cite{FEI} and since then they have become canonical for both time-frequency and phase-space analysis. It can be proved that different choices of the function $\sigma_{0}$ lead to equivalent norms in $M^{s}_{p,q}(\mathbb R^{d})$ (see e.g. \cite[Proposition 2.9]{CHA} and \cite[Proposition 3.4]{BLB}). When $s=0$ we denote the space $M^{0}_{p,q}(\mathbb R^{d})$ by $M_{p,q}(\mathbb R^{d})$. In the special case where $p=q=2$ we have $M_{2,2}^{s}(\R^{d})=H^{s}(\R^{d})$ where
\begin{equation}
\label{sobsob}
H^{s}(\R^d)=\Big\{f\in S'(\R^d)\ \Big|\ \|f\|_{H^{s}(\R^d)}=\Big(\int_{\R^d}\langle\xi\rangle^{2s}|\hat{f}(\xi)|^{2}d\xi\Big)^{\frac12}<\infty\Big\}.
\end{equation}

Every time we write $\|f\|_{p}$ or $\|f\|_{L^{p}}$ we mean the usual $p$-norms in the Lebesgue spaces $L^{p}(\R^d)$. In addition, for a given interval $I\subset\R$ we use the notation $\|f\|_{L^{p}_{I}}$ for the $L^{p}$ norm of $f$ over $I$. Finally, for $p\in[1,\infty]$ we denote by $p'$ the conjugate exponent of $p$ that is the number that satisfies $\frac1{p}+\frac1{p'}=1$. 

Our main results are the following theorems.
\begin{theorem}
\label{maintheoremloc}
Assume that the dimension $d\geq3$ and the power nonlinearity in \eqref{maineq} satisfies\footnotemark $\alpha\in \left[1,\frac{d}{d-3}\right]\cap\R$.
Then, the Cauchy problem \eqref{maineq} with initial data 
$$(\phi,\psi)\in H^{1}(\R^d)\times L^{2}(\R^d)+M_{2\alpha,(2\alpha)'}^{1}(\R^d)\times M_{2\alpha,(2\alpha)'}(\R^d)$$
 is locally wellposed and the solution $u$ lies in 
\begin{equation}
\label{propu}
C([0,T], H^{1}(\R^d))\cap L^{q_{a}(2\alpha)}([0,T], L^{2\alpha}(\R^d))+C([0,T], M^{1}_{2\alpha, (2\alpha)'}(\R^d))
\end{equation}
with $\partial_{t}u\in C([0,T],L^{2}(\R^d))+C([0,T],M_{2\alpha,(2\alpha)'}(\R^d))$, where the guaranteed time of existence $T=T(\|(\phi,\psi)\|_{H^{1}(\R^d)\times L^{2}(\R^d)+M_{2\alpha,(2\alpha)'}^{1}(\R^d)\times M_{2\alpha,(2\alpha)'}(\R^d)})>0$. 

Moreover, if $T_{*}$ is the maximal time of existence then the blowup alternative holds, i.e.
\begin{equation}
\label{blowalt}
T_{*}<\infty\ \Rightarrow\ \limsup_{t\to T_{*}^{-}}\|(u(t,\cdot),\partial_{t}u(t,\cdot))\|_{H^{1}(\R^d)\times L^{2}(\R^d)+M_{2\alpha,(2\alpha)'}^{1}(\R^d)\times M_{2\alpha,(2\alpha)'}(\R^d)}=\infty.
\end{equation}
\end{theorem}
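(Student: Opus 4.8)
The plan is the high-low splitting of the data, with the modulation-space part carried along by its own linear flow. Write $\phi=\phi_1+\phi_2$, $\psi=\psi_1+\psi_2$ with $(\phi_1,\psi_1)\in H^1(\R^d)\times L^2(\R^d)$ and $(\phi_2,\psi_2)\in M^1_{2\alpha,(2\alpha)'}(\R^d)\times M_{2\alpha,(2\alpha)'}(\R^d)$, the two summands having norms comparable to $\|(\phi,\psi)\|_{H^1\times L^2+M^1_{2\alpha,(2\alpha)'}\times M_{2\alpha,(2\alpha)'}}$. Put $h(t):=K'(t)\phi_2+K(t)\psi_2$; since the half-wave propagators $e^{\pm it(I-\Delta)^{1/2}}$ — hence $K'(t)$ and $(I-\Delta)^{1/2}K(t)$ — are bounded on $M_{p,q}(\R^d)$ with operator norm growing at most polynomially in $t$, and since $K(t)$ gains one derivative, we have $h\in C([0,T],M^1_{2\alpha,(2\alpha)'})$ for every $T>0$, and therefore, by the standard embedding $M_{p,p'}(\R^d)\hookrightarrow L^p(\R^d)$ valid for $p\ge2$ (applied with $p=2\alpha$), also $h\in C([0,T],L^{2\alpha})$. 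Now seek the solution in the form $u=v+h$, where $v$ must solve the Klein-Gordon equation with source $-|v+h|^{\alpha-1}(v+h)$ and data $(\phi_1,\psi_1)$; by \eqref{duhlin} this is the integral equation
\[
v(t)=K'(t)\phi_1+K(t)\psi_1-\int_0^t K(t-\tau)\bigl(|v+h|^{\alpha-1}(v+h)\bigr)(\tau)\,d\tau=:\Lambda(v)(t).
\]

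I would solve $v=\Lambda(v)$ by the contraction mapping principle in a closed ball of $Z_T:=C([0,T],H^1(\R^d))\cap L^{q_a(2\alpha)}([0,T],L^{2\alpha}(\R^d))$, with $\partial_t v$ automatically in $C([0,T],L^2(\R^d))$. The linear term $K'(t)\phi_1+K(t)\psi_1$ lies in $Z_T$ by the Strichartz estimates for the Klein-Gordon equation, the admissibility of the pair $(q_a(2\alpha),2\alpha)$ (with $q_a(2\alpha)\ge2$) being precisely what the hypothesis $\alpha\le\frac{d}{d-3}$ guarantees, together with the embedding $H^1\hookrightarrow L^{2\alpha}$. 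For the nonlinear term, the pointwise inequality $\bigl||z_1|^{\alpha-1}z_1-|z_2|^{\alpha-1}z_2\bigr|\lesssim\bigl(|z_1|^{\alpha-1}+|z_2|^{\alpha-1}\bigr)|z_1-z_2|$ (for $\alpha\ge1$), Hölder in $x$ (using $v,h\in L^{2\alpha}_x$, the latter from the embedding above), and Hölder in $t$ give
\[
\bigl\||v+h|^{\alpha-1}(v+h)\bigr\|_{L^1([0,T],L^2)}\lesssim T^{\gamma}\bigl(\|v\|_{L^{q_a(2\alpha)}_TL^{2\alpha}}^{\alpha}+\|h\|_{C_TL^{2\alpha}}^{\alpha}\bigr)
\]
with a positive exponent $\gamma=\gamma(d,\alpha)$ — here the energy-subcriticality built into the admissible range of $\alpha$ is used, in the form $q_a(2\alpha)>\alpha$ (the footnoted small-dimensional cases being handled by the corresponding restriction on $\alpha$) — and feeding this through the $H^1\times L^2$-isometry $\mathbb K(t)$ of \eqref{semig} together with the $L^1_tL^2_x$-endpoint Strichartz estimate places the Duhamel term in $Z_T$ with a gain of $T^{\gamma}$; the analogous Lipschitz estimate holds for two inputs $v_1,v_2$. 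Consequently $\Lambda$ maps a small ball of $Z_T$ into itself and is a contraction there once $T=T\bigl(\|(\phi,\psi)\|_{H^1\times L^2+M^1_{2\alpha,(2\alpha)'}\times M_{2\alpha,(2\alpha)'}}\bigr)>0$ is small enough.

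The essential point — and, I expect, where the real work lies — is the structural observation just used: the modulation-space part of the data need not be resolved at all; it is transported by its own linear flow $h$, which stays in $M^1_{2\alpha,(2\alpha)'}$ because the Klein-Gordon propagators are bounded on modulation spaces, and it enters the nonlinear analysis only through $M^1_{2\alpha,(2\alpha)'}\hookrightarrow L^{2\alpha}$, so that the fixed point can be run entirely in the classical $H^1$-Strichartz space. The two supporting modulation-space facts — polynomial-in-$t$ boundedness of $e^{\pm it(I-\Delta)^{1/2}}$ on $M_{p,q}$, and $M_{p,p'}\hookrightarrow L^p$ for $p\ge2$ — are standard; the one genuinely delicate estimate is the $L^1([0,T],L^2)$-bound on the nonlinearity with a positive power of $T$, and the role of the exponents is simply that $p=2\alpha$ makes $M_{p,p'}\hookrightarrow L^p$ land in the Lebesgue space $L^{2\alpha}$ in which the nonlinearity is naturally controlled, while $\alpha\le\frac{d}{d-3}$ makes $(q_a(2\alpha),2\alpha)$ an admissible Strichartz pair. (In the energy-subcritical range one could equivalently split off the global $H^1\times L^2$-solution furnished by Theorem~\ref{GVth}, using the energy conservation \eqref{energycon}; that is the route needed for the global theorem.)

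Granting all this, the unique fixed point $v\in Z_T$ yields the solution $u:=v+h$, which lies in the space \eqref{propu} with $\partial_t u=\partial_t v+\partial_t h\in C([0,T],L^2)+C([0,T],M_{2\alpha,(2\alpha)'})$; uniqueness (for a fixed decomposition of the data, with independence of the decomposition following by a standard persistence argument) and Lipschitz dependence on the data are read off from the contraction. Finally, for the blowup alternative: if the maximal time $T_{*}$ were finite while $R:=\limsup_{t\to T_{*}^{-}}\|(u(t),\partial_t u(t))\|_{H^1\times L^2+M^1_{2\alpha,(2\alpha)'}\times M_{2\alpha,(2\alpha)'}}<\infty$, choose $t_n\uparrow T_{*}$ and, for each $n$, a decomposition of $(u(t_n),\partial_t u(t_n))$ of norm at most $2R$; the local theory started at time $t_n$ then produces a solution on $[t_n,t_n+\delta]$ with $\delta=\delta(R)>0$ independent of $n$, and choosing $t_n>T_{*}-\delta$ and gluing by uniqueness extends the solution beyond $T_{*}$, contradicting the maximality of $T_{*}$. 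Hence \eqref{blowalt} holds.
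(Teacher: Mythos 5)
Your proposal is correct and follows essentially the same route as the paper: split the data, propagate the modulation-space part by $K,K'$ using their boundedness on $M^{1}_{2\alpha,(2\alpha)'}$, feed it into the nonlinearity only through $M_{2\alpha,(2\alpha)'}\hookrightarrow L^{2\alpha}$, and close a contraction with the Klein--Gordon Strichartz estimate for the admissible pair $(q_{a}(2\alpha),2\alpha)$ and a H\"older-in-time gain $T^{\gamma}$. The only (cosmetic) difference is that you freeze the linear flow $h$ and contract for $v=u-h$ in the Strichartz space, whereas the paper contracts for the full $u$ in the sum space $X(T)=X_{1}(T)+X_{2}(T)$ with the infimum norm \eqref{splitX}; the estimates and exponent restrictions are identical.
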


\footnotetext{In the case $d=3$ the expression $\frac{d}{d-3}$ shall be understood as $\infty$ and no upper bound on $\alpha$ is required.}

\begin{remark}
\label{remlast5}
The restrictions on $\alpha$ appear because of Strichartz estimates. For more details, see Remark \ref{alpharestric} after the proof of Theorem \ref{maintheoremloc}.
\end{remark}

\begin{theorem}
\label{maintheoremglo}
For $\alpha\in\left[1,\frac{d}{d-2}\right]$ and \emph{real} initial data $(\phi,\psi)\in M_{p, p'}^{1}(\R^d)\times M_{p,p'}(\R^d)$ with $p\in(2,p_{\text{max}})$, where we have
\begin{equation}
\label{kkpp}
p_{\text{max}}=\begin{cases} 
2\alpha\ \frac{(\alpha+1)(\alpha-2)+2}{\alpha(\alpha+1)(\alpha-2)+2}, & \text{if } \alpha \in \left(2,\infty\right) \\
2 \alpha, & \text{if } \alpha \in \left[1, 2\right]
\end{cases}
\end{equation}
the local solution $u$ of \eqref{maineq} constructed in Theorem \ref{maintheoremloc} extends globally and lies in
\begin{equation}
\label{propu2}
u\in C(\R, H^{1}(\R^d))+C(\R, M^{1}_{2\alpha, (2\alpha)'}(\R^d)),
\end{equation}
with $\partial_{t}u\in C(\R, L^{2}(\R^d))+C(\R, M_{2\alpha, (2\alpha)'}(\R^d))$. 
\end{theorem}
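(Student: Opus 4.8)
The plan is to implement Bourgain's high-low frequency decomposition. Given real initial data $(\phi,\psi)\in M_{p,p'}^{1}\times M_{p,p'}$ with $p\in(2,p_{\max})$, fix a large parameter $N$ and split $\phi=\phi_{\text{lo}}+\phi_{\text{hi}}$, $\psi=\psi_{\text{lo}}+\psi_{\text{hi}}$ by a Fourier projection onto frequencies $|\xi|\leq N$ and $|\xi|>N$. The point is that, because $p>2$, the low-frequency part $(\phi_{\text{lo}},\psi_{\text{lo}})$ lands in $H^{1}\times L^{2}$ with norm growing like a positive power of $N$ (quantitatively, $\|\phi_{\text{lo}}\|_{H^{1}}\lesssim N^{\theta}\|\phi\|_{M_{p,p'}^{1}}$ for the appropriate $\theta=\theta(d,p)>0$ coming from Bernstein's inequality together with the $\ell^{p'}\hookrightarrow\ell^{2}$ versus $\ell^{2}\hookrightarrow\ell^{p'}$ mismatch on dyadic blocks), while the high-frequency part is \emph{small} in the modulation space: $\|(\phi_{\text{hi}},\psi_{\text{hi}})\|_{M_{2\alpha,(2\alpha)'}^{1}\times M_{2\alpha,(2\alpha)'}}\lesssim N^{-\delta}$ for some $\delta>0$, again by the embedding $M_{p,p'}^{1}\hookrightarrow M_{2\alpha,(2\alpha)'}^{1}$ with a frequency-localized gain since $p<2\alpha$ and $p'>(2\alpha)'$.

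Next I would evolve the two pieces: let $v$ solve the full nonlinear equation \eqref{maineq} with data $(\phi_{\text{lo}},\psi_{\text{lo}})\in H^{1}\times L^{2}$, which is globally wellposed by Theorem~\ref{GVth} with conserved energy $E(v(t),\partial_t v(t))=E(\phi_{\text{lo}},\psi_{\text{lo}})\lesssim N^{2\theta}$ (here the subcriticality $\alpha\leq\frac{d}{d-2}<1+\frac{4}{d-2}$ is used, and $\alpha+1$ being below the $H^1$-critical exponent keeps the potential term controlled by the energy). Then write $u=v+w$ and derive the equation for the perturbation $w$: it satisfies the linear Klein–Gordon equation with a forcing term $F=|v|^{\alpha-1}v-|v+w|^{\alpha-1}(v+w)$ and zero-frequency-$\leq N$-removed data $(\phi_{\text{hi}},\psi_{\text{hi}})$. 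On a short time interval $[0,\delta_0]$ one runs the contraction-mapping/local-wellposedness machinery of Theorem~\ref{maintheoremloc} in the space \eqref{propu} for $w$; since the source data is $O(N^{-\delta})$ in the modulation norm and $v$ is controlled in the Strichartz norm $L^{q_a(2\alpha)}_{[0,\delta_0]}L^{2\alpha}$ by its $H^1$-norm (hence by a power of $N$), a bootstrap shows $w$ stays of size $O(N^{-\delta})$ in $C([0,\delta_0],M^{1}_{2\alpha,(2\alpha)'})$ and that $\delta_0$ can be taken $\geq N^{-\beta}$ for an explicit $\beta>0$.

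The crux is the iteration: after time $\delta_0$ one has $u(\delta_0)=v(\delta_0)+w(\delta_0)$, and one would like to restart, absorbing $w(\delta_0)$ into the $H^1\times L^2$ part to keep only a single nonlinear $H^1$-flow carrying the energy. The energy of the low-frequency flow increases at each step by the interaction of $w$ with $v$; the key estimate is that this energy increment is bounded by $N^{-\delta'}$ \emph{per step} for some $\delta'>0$ (this is where the precise range of $p$, i.e. the formula \eqref{kkpp} for $p_{\max}$, enters — one needs the $w$–$v$ coupling in the energy to decay in $N$ faster than the number of steps grows). To reach a fixed final time $T$ one needs $\sim T/\delta_0\sim TN^{\beta}$ steps, so the total energy growth is $\lesssim N^{2\theta}+TN^{\beta}\cdot N^{-\delta'}$, which stays $\lesssim N^{2\theta}$ provided $\beta<\delta'$; choosing $N$ large (depending on $T$) then closes the argument and gives a global solution with the stated regularity \eqref{propu2}. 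The main obstacle is precisely the bookkeeping in this last step: balancing the exponent $\theta$ (growth of the low-frequency energy from the initial split), $\beta$ (inverse length of each time step), $\delta$ (smallness of the high-frequency data) and $\delta'$ (energy increment per step) so that all inequalities are simultaneously satisfiable, and it is this balancing that forces $\alpha\leq\frac{d}{d-2}$ and the cutoff $p_{\max}$; one also has to check that absorbing $w(\delta_0)$ into the low-frequency flow is legitimate, i.e. that $w(\delta_0)$ genuinely lies in $H^1\times L^2$ (not merely in a modulation space), which follows because the Duhamel term built from the $L^{q_a'}_tL^{(2\alpha)'}_x$ forcing has extra smoothing and $w$'s own data has been arranged to be in $H^1\times L^2$ at each restart by a further high-frequency truncation at level $N$.
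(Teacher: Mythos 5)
Your argument breaks down at the very first step: for $p>2$ a sharp Fourier truncation does \emph{not} produce the required splitting. The per-block bound you implicitly need, $\|\Box_k\phi\|_{L^2}\lesssim\|\Box_k\phi\|_{L^p}$, is false because Bernstein only moves from lower to \emph{higher} Lebesgue exponents; a band-limited function decaying like $\langle x\rangle^{-\gamma}$ with $d/p<\gamma\le d/2$ lies in $L^p\setminus L^2$, belongs to $M^{1}_{p,p'}(\R^d)$ (only finitely many $\Box_k$ are nonzero), and its low-frequency truncation is the function itself — so $(\phi_{\text{lo}},\psi_{\text{lo}})$ need not lie in $H^{1}\times L^{2}$ at all, let alone with norm $O(N^{\theta})$. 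Likewise the claimed $N^{-\delta}$ smallness of the high-frequency piece in $M^{1}_{2\alpha,(2\alpha)'}$ has no source of gain: both norms carry the same weight $\langle k\rangle^{1}$, and since $p'>(2\alpha)'$ the $\ell^{q}$-embedding goes the wrong way, so H\"older in $k$ over $\{|k|>N\}$ yields a divergent factor rather than a negative power of $N$. For $p>2$ the obstruction to $H^1\times L^2$ membership is integrability/decay in $x$, not regularity, which is precisely why the paper does not cut frequencies: it splits the data abstractly through the $K$-functional of the real interpolation couple containing $[H^{1},M^{1}_{2\alpha,(2\alpha)'}]_{\tilde\theta}=M^{1}_{p,p'}$, obtaining $(\Phi_N,\Phi^N)$ with $\|\Phi_N\|_{H^1}\lesssim N^{\tilde\alpha}$ and $\|\Phi^N\|_{M^{1}_{2\alpha,(2\alpha)'}}\lesssim N^{-1}$.

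The continuation scheme is also not the one that works here, and its decisive estimates are only asserted. The paper never iterates over $\sim TN^{\beta}$ time steps and never re-absorbs the rough part into the energy flow: the free evolution $e^{-itB}\Phi^N$ is kept for \emph{all} times (it is harmless by \eqref{boundd}), and only the difference $\tilde v=v-e^{-itB}\Phi^N$ is controlled in $H^1$, via the Hamiltonian: one bounds $I'(t)$ by Cauchy–Schwarz, the embeddings \eqref{embedd} and \eqref{embeddsob}, and gets $I(t)\le 2I(0)$ on $[0,T]$ with $T\sim N^{1+\tilde\alpha(\alpha+1)(2-\alpha)/2}$ in a single stroke; the requirement that this exponent be positive is exactly where $p<p_{\text{max}}$ from \eqref{kkpp} enters, and letting $N\to\infty$ together with the blow-up alternative gives global existence, so no bookkeeping of per-step energy increments is needed. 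In your outline the core claims — the $N^{-\delta'}$ increment per step, the step length $N^{-\beta}$ with $\beta<\delta'$, and the legitimacy of absorbing $w(\delta_0)$ into $H^{1}\times L^{2}$ (justified by "a further high-frequency truncation", which fails for the same reason as above, making the step circular) — are stated but not proved, and the announced derivation of $p_{\text{max}}$ is never carried out. So the proposal both rests on a false decomposition and leaves the heart of the global argument open.
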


\begin{remark}
To the best of the authors' knowledge Theorem \ref{maintheoremglo} is the first global wellposedness result for the Klein-Gordon equation on modulation spaces (which do not coincide with Sobolev spaces).
\end{remark}

\begin{remark}
\label{restricp}
The restriction $\alpha\in\left[1,\frac{d}{d-2}\right]$ appears because in the proof of Theorem \ref{maintheoremglo} we estimate an $L^{2\alpha}(\R^d)$ norm of a function by its $H^{1}(\R^d)$ norm. Thus, $2\alpha\in[2,\frac{2d}{d-2}]$ by the assumptions of the Sobolev embedding \eqref{embeddsob}. 

The restrictions on $p$ arise from \eqref{alphareq} where we make use of the identities $\tilde{\alpha}=\frac{\theta}{\theta-1}$ and $\frac1{p}=\frac{1-\theta}{2}+\frac{\theta}{2\alpha}$ in order to obtain the upper bound for $p$ in terms of the nonlinearity $\alpha$.
\end{remark}

\begin{remark}
\label{dimensionsmall}
In dimensions $d \in \left\{1, 2\right\}$ Theorem \ref{maintheoremglo} holds with no restrictions on $\alpha$ and the restriction $p\in(2,p_{\text{max}})$ with $p_{\text{max}}$  given by \eqref{kkpp} remains. This is due to the fact that one has enough control from the Sobolev embeddings and does not require Strichartz estimates for the local wellposedness argument. The proof of the global existence remains the same and is presented in the next section.
\end{remark}

\begin{remark}
\label{HL47}
The method used to prove Theorem \ref{maintheoremglo}, i.e. the high-low method, was used in \cite{CHA} and \cite{CHKP} to study the NLS equation
\begin{equation}
\label{mainNLS}
\begin{cases} iu_{t}+\Delta u\pm|u|^{\alpha-1}u=0 &,\ (t,x)\in\mathbb R^{d+1}\\
u(0,x)=u_{0}(x) &,\ x\in\mathbb R^d\\
\end{cases}
\end{equation}
with initial data in modulation spaces $M_{p,p'}(\R^d)$ with $p$ close to $2$. Global existence was obtained for the cubic nonlinearity $\alpha=3$ and $d=1$ in \cite{CHKP} and then this result was generalised for all $\alpha\in(1,1+\frac4{d})$ and $d\geq1$ in \cite{CHA}.
\end{remark}
At the heart of the high low method is the following idea. The initial datum and the solution are split into two parts, namely the good part (low frequencies) and the bad part (high frequencies), such that the linear propagation of the bad part shall not pose a problem, whereas the nonlinear interaction of the high and low frequencies can be controlled by the nonlinear smoothing effect inherent to the PDE.

Before we proceed to the proofs of Theorems \ref{maintheoremloc} and \ref{maintheoremglo} let us state some known facts which are going to be used in the proofs of the main theorems in the next and last section. 

By \cite[Proposition 2.7]{BH} it is known that for any $1<p\leq\infty$ we have the embedding $M_{p,1}(\R^d)\hookrightarrow L^{p}(\R^d)\cap L^{\infty}(\R^d)$ which together with the fact that $M_{2,2}(\R^d)=L^{2}(\R^d)$ and interpolation, imply that for any $p\in[2,\infty]$ we have the embedding 
\begin{equation}
\label{embedd}
M_{p,p'}(\R^d)\hookrightarrow L^{p}(\R^d).
\end{equation}

In \cite[Theorem 6.6]{FEI} (see also \cite[Proposition 2.4]{BH}) it is proved that for all $\sigma, s\in\R$ and $p,q\in[1,\infty]$ the map $(Id-\Delta)^{\frac{\sigma}2}$ is an isomorphism from $M_{p,q}^{s}(\R^d)$ onto $M_{p,q}^{s-\sigma}(\R^d)$. We will use this fact in Section \ref{mainsec} for $\sigma=1$. 

For $d\geq3$ by the Sobolev embedding theorem in \cite[Theorem 9.9]{BRE} we have that
\begin{equation}
\label{embeddsob}
H^{1}(\R^d)\hookrightarrow L^{p}(\R^d),
\end{equation}
for all $p\in[2,\frac{2d}{d-2}]$.

If $u$ is the solution to \eqref{duhlin} with initial data $(\phi,\psi)\in H^{1}(\R^d)\times L^{2}(\R^d)$ then we have the following well known Strichartz estimate (see \cite[Corollary 1.3]{KT}, \cite[Theorem 6.4.1]{OV}, \cite[Estimate 1.1]{ZZ} and \cite[Section 2.2]{GGr})
\begin{eqnarray}
\label{Strr}
\|u\|_{L^{q}([0,T],L^{r}(\R^d))}+\|u\|_{C([0,T], H^{1}(\R^d))}+\|\partial_{t}u\|_{C([0,T],L^{2}(\R^d))} \\ \lesssim \|\phi\|_{H^{1}(\R^d)}+\|\psi\|_{L^{2}(\R^d)}+\|F\|_{L^{1}([0,T],L^{2}(\R^d))},\nonumber
\end{eqnarray}
for every admissible pair $(q,r)$ that satisfies the gap condition described in Definition \ref{def1}, i.e. 
$$q=\left(d\left(\frac12-\frac1{r}\right)-1\right)^{-1},\qquad r\in\left[\left(\frac12-\frac1{d-1}\right)^{-1},\frac{2(d+1)}{d-3}\right].$$

The operators $K(t)$ and $K'(t)$ defined in \eqref{kern2} are bounded on modulation spaces (see \cite[Corollary 3.2]{CN}) . More precisely, for $s\in\R$, $p,q\in[1,\infty]$ and every $T>0$ there is a positive constant $C_{T}=C_{T}(p,q,s)$ such that 
\begin{equation}
\label{boundd}
\|K(t)f\|_{M_{p,q}^{s+1}(\R^d)}+\|K'(t)f\|_{M_{p,q}^{s}(\R^d)}\leq C_{T}\|f\|_{M_{p,q}^{s}(\R^d)}
\end{equation}
for all $f\in M_{p,q}^{s}(\R^d)$ and all $|t|\leq T$. 

\end{section}

\begin{section}{Proofs of the Main Theorems}
\label{mainsec}
\begin{proof}[Proof of Theorem \ref{maintheoremloc}]
Since our initial data $(\phi,\psi)$ lies in the space
$$X\coloneqq H^{1}(\R^d)\times L^{2}(\R^d)+M_{2\alpha,(2\alpha)'}^{1}(\R^d)\times M_{2\alpha,(2\alpha)'}(\R^d)$$
we may write $(\phi,\psi)=(\phi_{0},\psi_{0})+(\tilde{\phi}_{0},\tilde{\psi}_{0})$ with
$$(\phi_{0},\psi_{0})\in H^{1}(\R^d)\times L^{2}(\R^d)\ \mbox{and}\ (\tilde{\phi}_{0},\tilde{\psi}_{0})\in M_{2\alpha,(2\alpha)'}^{1}(\R^d)\times M_{2\alpha,(2\alpha)'}(\R^d).$$

In the following we present the Banach contraction argument for the first coordinate of the solution $(u,\partial_{t}u)$, i.e. for $u$ only, since by \eqref{Strr} the argument for $\partial_{t}u$ is similar. In addition, we only treat the case where $\alpha\in\left(\frac{d}{d-2},\frac{d}{d-3}\right]$, since in the remaining interval $\left[1,\frac{d}{d-2}\right]$ Strichartz estimates are not required to finish the argument.

We are going to work in the Banach space $X(T)=X_{1}(T)+X_{2}(T)$ with
$$X_{1}(T)=L^{\infty}([0,T], H^{1}(\R^d))\cap L^{q_{a}(2\alpha)}([0,T], L^{2\alpha}(\R^d))$$
and
$$X_{2}(T)=L^{\infty}([0,T], M_{2\alpha, (2\alpha)'}(\R^d)).$$
The norm in $X(T)$ is given by
\begin{equation}
\label{splitX}
\|u\|_{X(T)}=\inf_{\substack{u=u_{1}+u_{2}\\ u_{1}\in X_{1}(T),u_{2}\in X_{2}(T)}}\Big[\|u_{1}\|_{X_{1}(T)}+\|u_{2}\|_{X_{2}(T)}\Big]
\end{equation}
and the operator we are interested in is
\begin{equation}
\label{opera}
\mathcal Tu=K'(t)\phi_{0}+K(t)\psi_{0}+K'(t)\tilde{\phi}_{0}+K(t)\tilde{\psi}_{0}-\int_{0}^{t}K(t-\tau)(|u|^{\alpha-1}u)\ d\tau
\end{equation}
for $u$ in the ball $M(R,T)=\{u\in X(T) | \|u\|_{X(T)}\leq R\}$. The claim is that for some positive numbers $R$ and $T$ the operator $\mathcal T$ is a contraction in $M(R,T)$. 

We start with the self-mapping property of $\mathcal T$. Let us fix $u\in M(R,T)$ and consider a splitting $u=v+w$ with $v\in X_{1}(T)$ and $w\in X_{2}(T)$. 

For the linear evolution part of $\mathcal T$ we have that the norm
$$\Big\|K'(t)\phi_{0}+K(t)\psi_{0}+K'(t)\tilde{\phi}_{0}+K(t)\tilde{\psi}_{0}\Big\|_{X(T)}$$
is controlled by
$$\Big\|K'(t)\phi_{0}+K(t)\psi_{0}\Big\|_{X_{1}(T)}+\Big\|K'(t)\tilde{\phi}_{0}+K(t)\tilde{\psi}_{0}\Big\|_{X_{2}(T)}.$$
For the first term we use the Strichartz estimate stated in \eqref{Strr} which implies
$$\Big\|K'(t)\phi_{0}+K(t)\psi_{0}\Big\|_{X_{1}(T)}\lesssim\|\phi_{0}\|_{H^{1}(\R^d)}+\|\psi_{0}\|_{L^{2}(\R^d)}$$
and for the second term the boundedness of $K(t)$ and $K'(t)$ stated in \eqref{boundd} implies
\begin{eqnarray*}
\Big\|K'(t)\tilde{\phi}_{0}+K(t)\tilde{\psi}_{0}\Big\|_{X_{2}(T)} &\lesssim& C(T)\Big(\|\tilde{\phi}_{0}\|_{M^{1}_{2\alpha,(2\alpha)'}}+\|\tilde{\psi}_{0}\|_{M_{2\alpha,(2\alpha)'}}\Big)\\ &\lesssim&  \|\tilde{\phi}_{0}\|_{M^{1}_{2\alpha,(2\alpha)'}}+\|\tilde{\psi}_{0}\|_{M_{2\alpha,(2\alpha)'}}
\end{eqnarray*}
where, without loss of generality, we assumed for the time of existence $T\leq1$. As the splitting of the initial data $(\phi,\psi)$ was arbitrary we have that 
\begin{equation}
\label{ffgt}
\Big\|K'(t)\phi+K(t)\psi\Big\|_{X(T)}\lesssim\|(\phi,\psi)\|_{X}.
\end{equation}
This suggests the choice 
\begin{equation}
\label{radiuu}
R\approx2\ \|(\phi,\psi)\|_{X}.
\end{equation}
Before we deal with the integral part of the operator $\mathcal T$ let us observe that since the modulation space $M_{2\alpha,(2\alpha)'}(\R^d)\hookrightarrow L^{2\alpha}(\R^d)$ (see \eqref{embedd}) and $L^{\infty}_{[0,T]}\hookrightarrow L^{q_{a}(2\alpha)}_{[0,T]}$, we trivially obtain
\begin{equation}
\label{esstimaa}
X(T)\hookrightarrow L^{q_{a}(2\alpha)}([0,T], L^{2\alpha}(\R^d)).
\end{equation} 
The integral part in \eqref{opera} is estimated in the $X_{1}(T)$ norm using \eqref{Strr} by
\begin{eqnarray*}
\Big\|\int_{0}^{t}K(t-\tau)(|u|^{\alpha-1}u)\ d\tau\Big\|_{L^{q_{a}(2\alpha)}([0,T], L^{2\alpha}(\R^d))} &\lesssim& \||u|^{\alpha-1}u\|_{L^{1}([0,T],L^{2}(\R^d))} \\ &=& \|u\|^{\alpha}_{L^{\alpha}([0,T],L^{2\alpha}(\R^d)} \\ &\leq&T^{\frac{d+2-\alpha(d-2)}{2}}\|u\|^{\alpha}_{L^{q_{a}(2\alpha)}([0,T], L^{2\alpha}(\R^d))}\\ &\lesssim& T^{\frac{d+2-\alpha(d-2)}{2}}\|u\|_{X(T)}^{\alpha} \\ &\leq& T^{\frac{d+2-\alpha(d-2)}{2}}R^{\alpha},
\end{eqnarray*}
where at the third step we used H\"older's inequality and at the forth step \eqref{esstimaa}. Thus, the operator $\mathcal T$ is a self-mapping of $M(R,T)$ if 
$$T^{\frac{d+2-\alpha(d-2)}{2}}R^{\alpha}\leq\frac{R}2$$
or equivalently,
\begin{equation}
\label{timedepnds}
T\ \lesssim \ R^{\frac{2(1-\alpha)}{d+2-\alpha(d-2)}}\ \approx\ \Big(\|(\phi,\psi)\|_{X}\Big)^{\frac{2(1-\alpha)}{d+2-\alpha(d-2)}}.
\end{equation}

For the contraction property of $\mathcal T$ we have by using the same considerations as above for $u_{1}, u_{2}\in X(T)$ and the size estimate 
\begin{equation}
\label{sizz}
||u_{1}|^{\alpha-1}u_{1}-|u_{2}|^{\alpha-1}u_{2}|\lesssim(|u_{1}|^{\alpha-1}+|u_{2}|^{\alpha-1})|u_{1}-u_{2}|
\end{equation}
that the following holds
\begin{eqnarray*}
& & \|\mathcal T(u_{1})-\mathcal T(u_{2})\|_{X_{1}(T)} \\
&\lesssim& T^{\frac{d+2-\alpha(d-2)}{2}}\Big(\|u_{1}\|_{L^{q_{a}(2\alpha)}_{[0,T]}L^{2\alpha}}^{\alpha-1}+\|u_{2}\|_{L^{q_{a}(2\alpha)}_{[0,T]}L^{2\alpha}}^{\alpha-1}\Big)\|u_{1}-u_{2}\|_{L^{q_{a}(2\alpha)}_{[0,T]}L^{2\alpha}} \\ &\lesssim& T^{\frac{d+2-\alpha(d-2)}{2}}\ R^{\alpha-1}\ \|u_{1}-u_{2}\|_{L^{q_{a}(2\alpha)}_{[0,T]}L^{2\alpha}}.
\end{eqnarray*}
Hence, by choosing a possibly smaller implicit constant we have that $\mathcal T$ is a contraction on $M(R,T)$ and the proof is complete.
\end{proof}
\begin{remark}
\label{alpharestric}
The restrictions on $\alpha$ stated in Theorem \ref{maintheoremloc} appear because of the number $q=q_{a}(2\alpha)$ which is defined by the gap condition \eqref{gapcon} and it is equal to 
\begin{equation}
\label{admq}
\frac1{q}=\frac{\alpha(d-2)-d}{2\alpha}.
\end{equation}
Since we must have $q\geq2$ we obtain the restriction $\alpha\leq\frac{d}{d-3}$.
\end{remark}

\begin{proof}[Proof of Theorem \ref{maintheoremglo}]
Consider functions $(\phi,\psi)\in M_{p, p'}^{1}(\R^d)\times M_{p,p'}(\R^d)$. Using complex interpolation (see \cite[Theorem 6.1 D]{FEI}) we write
\begin{equation}
\label{inter}
[H^{1}(\R^d), M_{r,r'}^{1}(\R^d)]_{\tilde{\theta}}=M_{p,p'}^{1}(\R^d), \ [L^{2}(\R^d), M_{r,r'}(\R^d)]_{\tilde{\theta}}=M_{p,p'}(\R^d)
\end{equation}
where $2<p<r=2\alpha<\infty$, $\tilde{\theta}\in(0,1)$ and $\frac1{p}=\frac{1-\tilde{\theta}}{2}+\frac{\tilde{\theta}}{r}$. Then, for $(\phi,\psi)\in M_{p,p'}^{1}(\R^d)\times M_{p,p'}(\R^d)$ and any large enough $N>0$ (to be found later) we decompose 
\begin{equation}
\label{splitt}
(\phi,\psi)=(\phi_{N},\psi_{N})+(\phi^{N},\psi^{N})\in H^{1}(\R^d)\times L^{2}(\R^d)+M_{r,r'}^{1}(\R^d)\times M_{r,r'}(\R^d)
\end{equation}
with
\begin{equation}
\label{bigsplit}
\|\phi_{N}\|_{H^{1}}, \|\psi_{N}\|_{L^{2}}\lesssim N^{\tilde{\alpha}},\ \|\phi^{N}\|_{M_{r,r'}^{1}}, \|\psi^{N}\|_{M_{r,r'}}\lesssim\frac1{N}
\end{equation}
where $\tilde{\alpha}=\frac{\tilde{\theta}}{1-\tilde{\theta}}$. This is possible since the complex interpolation spaces mentioned above embed in the real interpolation spaces (see \cite[Theorem 1.10.3/1]{TRTR})
$$(H^{1}(\R^d), M_{r,r'}^{1}(\R^d))_{\tilde{\theta},\infty},\ (L^{2}(\R^d), M_{r,r'}(\R^d))_{\tilde{\theta},\infty}$$
whose norm is given by the $K$ functional
$$\|u\|_{\tilde{\theta},\infty}=\sup_{t>0}\Big(\ t^{-\tilde{\theta}}\inf_{\substack{u=\phi+\psi \\ \phi\in H^{1},\ \psi\in M^{1}_{r,r'}}}\Big[\|\phi\|_{H^{1}}+t\|\psi\|_{M^{1}_{r,r'}}\Big]\Big)$$
(similarly for the pair $L^{2}$ and $M_{r,r'}$). Then for any given $N\in\R_{+}$ setting $t=N^{\tilde{\alpha}+1}$ and $\tilde{\theta}=\frac{\tilde{\alpha}}{\tilde{\alpha}+1}$ shows \eqref{bigsplit}. 

By Theorem \ref{maintheoremloc} for real initial data $(\phi,\psi)$ we know that there is a \emph{real-valued} local solution $u$ to \eqref{maineq}. From the blowup alternative we know that if the norm
\begin{equation}
\label{normblow}
\|(u(t,\cdot),\partial_{t}u(t,\cdot))\|_{H^{1}(\R^d)\times L^{2}(\R^d)+M_{2\alpha,(2\alpha)'}^{1}(\R^d)\times M_{2\alpha,(2\alpha)'}(\R^d)}
\end{equation}
does not blow up in finite time then the solution $u$ exists globally. Therefore, our goal is to show that we can control the quantity \eqref{normblow} on bounded time intervals. 

To simplify the notation we make the change of variables (as in \cite[Equation IV.2.4]{BOU})
\begin{equation}
\label{chnageof}
v=u+iB^{-1}u_{t},
\end{equation}
where $B^{2}=Id-\Delta$ and we rewrite \eqref{maineq} in the form
\begin{equation}
\label{newmaineq}
iv_{t}-Bv-B^{-1}(|\operatorname{Re} v|^{\alpha-1}\operatorname{Re} v)=0,
\end{equation}
and initial data $v(0)=u(0)+iB^{-1}(u_{t}(0))=\Phi_{N}+\Phi^{N}$ where we set $\Phi_{N}\coloneqq\phi_{N}+iB^{-1}(\psi_{N})\in H^{1}(\R^d)$ and $\Phi^{N}\coloneqq\phi^{N}+iB^{-1}(\psi^{N})\in M_{r,r'}^{1}(\R^d)$. From \eqref{bigsplit} we have the norm estimates
\begin{equation}
\label{normPhi}
\|\Phi_{N}\|_{H^{1}}\lesssim N^{\tilde{\alpha}},\ \|\Phi^{N}\|_{M^{1}_{r,r'}}\lesssim\frac1{N},
\end{equation}
and more generally, 
\begin{equation}
\|(u(t,\cdot),u_{t}(t,\cdot))\|_{H^{1}\times L^{2}+M^{1}_{2\alpha,(2\alpha)'}\times M_{2\alpha,(2\alpha)'}}\approx\|v(t,\cdot)\|_{H^{1}+M^{1}_{2\alpha,(2\alpha)'}}.
\end{equation}
The Hamiltonian of \eqref{newmaineq} (see \cite[Equation IV.2.7]{BOU}) is formally given by the formula
\begin{equation}
\label{Hamm}
H(v)=\int_{\R^d}\Big[\frac12\ |Bv|^{2}+\frac1{\alpha+1}\ |\operatorname{Re} v|^{\alpha+1}\Big]\ dx.
\end{equation}
However, we cannot use it to control the full solution $v$, since \eqref{Hamm} does not make sense for general $v(t,\cdot)\in H^{1}+M^{1}_{2\alpha,(2\alpha)'}$. From \eqref{boundd} the linear evolution $e^{-itB}\Phi^{N}$ never blows up in modulation spaces on any bounded time interval $I=[0,T]$. More precisely we have that
\begin{equation}
\label{lcrp77}
\|e^{-itB}\|_{M^{1}_{2\alpha,(2\alpha)'}(\R^d)\rightarrow M^{1}_{2\alpha,(2\alpha)'}(\R^d)}\lesssim1,
\end{equation}
for all $t\in I$.
Therefore, we are left to control
\begin{equation}
\label{change22}
\tilde{v}\coloneqq v-e^{-itB}\Phi^{N}
\end{equation}
in $H^{1}$ on $I$ (see Equation \cite[IV.2.21]{BOU}) using the Hamiltonian. 

Observe that $\tilde{v}(0)=\Phi_{N}$ and $\|\tilde{v}\|_{H^{1}}\lesssim N^{\tilde{\alpha}}$. Let us also define $I(t)=H(\tilde{v}(t))$ which at $0$ is controlled by 
\begin{equation}
\label{Hamnew}
I(0)\lesssim N^{2\tilde{\alpha}}+N^{\tilde{\alpha}(\alpha+1)}\lesssim N^{\tilde{\alpha}(\alpha+1)}
\end{equation}
where we have used the Sobolev embedding \eqref{embeddsob} for the second summand. Our goal is to estimate the time $T$ (as a function of $N$) that preserves \eqref{Hamnew}, i.e. $T$ such that for all $0\leq t\leq T$ we have
\begin{equation}
\label{Hamfort}
I(t)=\int_{\R^d}\Big[\frac12\ |B\tilde{v}(t,x)|^{2}+\frac1{\alpha+1}\ |\operatorname{Re} \tilde{v}(t,x)|^{\alpha+1}\Big]\ dx\leq2\ I(0)\lesssim N^{\tilde{\alpha}(\alpha+1)}.
\end{equation}
At least formally we have
\begin{equation}
\label{derof}
I'(t)=\operatorname{Im}\Big\langle B\tilde{v}, |\operatorname{Re} v|^{\alpha-1}\operatorname{Re} v-|\operatorname{Re}\tilde{v}|^{\alpha-1}\operatorname{Re}\tilde{v}\Big\rangle
\end{equation}
(see \cite[Equation IV.2.26]{BOU}). Invoking the Cauchy-Schwarz inequality we estimate the last quantity in absolute value by
\begin{equation}
\label{CS1}
\|B\tilde{v}\|_{2}\ \||\operatorname{Re} v|^{\alpha-1}\operatorname{Re} v-|\operatorname{Re}\tilde{v}|^{\alpha-1}\operatorname{Re}\tilde{v}\|_{2}.
\end{equation}
The first factor is estimated by $I(t)$, namely
\begin{equation}
\label{Godh}
\|B\tilde{v}\|_{2}\lesssim I(t)^{\frac12}\lesssim N^{\frac{\tilde{\alpha}(\alpha+1)}2}.
\end{equation}
For the second factor we have the pointwise size estimate (see \eqref{sizz})
\begin{eqnarray}
\label{secondfactGodh}
\left||\operatorname{Re} v|^{\alpha-1}\operatorname{Re}v-|\operatorname{Re}\tilde{v}|^{\alpha-1}\operatorname{Re}\tilde{v}\right| &\lesssim& |e^{-itB}\Phi^{N}|(|\operatorname{Re} v|^{\alpha-1}+|\operatorname{Re}\tilde{v}|^{\alpha-1}) \\ &\lesssim& |e^{-itB}\Phi^{N}||\tilde{v}|^{\alpha-1}+|e^{-itB}\Phi^{N}|^{\alpha}. \nonumber
\end{eqnarray}
Therefore, by H\"older's inequality for the first summand and with the use of the embedding of modulation spaces into Lebesgue spaces \eqref{embedd} and the Sobolev embedding \eqref{embeddsob} we obtain the estimate
\begin{eqnarray}
\label{Holde}
\left\||\operatorname{Re} v|^{\alpha-1}\operatorname{Re} v-|\operatorname{Re}\tilde{v}|^{\alpha-1}\operatorname{Re}\tilde{v}\right\|_{2} \nonumber &\lesssim& \|e^{-itB}\Phi^{N}\|_{2\alpha} \|\tilde{v}\|_{2\alpha}^{\alpha-1}+\|e^{-itB}\Phi^{N}\|_{2\alpha}^{\alpha} \\ &\lesssim& \|e^{-itB}\Phi^{N}\|_{M_{2\alpha,(2\alpha)'}}\|\tilde{v}\|_{2\alpha}^{\alpha-1}+\|e^{-itB}\Phi^{N}\|_{M_{2\alpha,(2\alpha)'}}^{\alpha} \\ &\lesssim&  \|\Phi^{N}\|_{M^{1}_{2\alpha,(2\alpha)'}}\|\tilde{v}\|_{H^{1}}^{\alpha-1}+\|\Phi^{N}\|_{M^{1}_{2\alpha,(2\alpha)'}}^{\alpha} \nonumber \\ &\lesssim& \|\Phi^{N}\|_{M^{1}_{2\alpha,(2\alpha)'}}I(t)^{\frac{\alpha-1}2}+\|\Phi^{N}\|_{M^{1}_{2\alpha,(2\alpha)'}}^{\alpha} \nonumber \\ &\lesssim& N^{\frac{\tilde{\alpha}(\alpha+1)(\alpha-1)}2-1}+N^{-\alpha}. \nonumber
\end{eqnarray}
At the third inequality we used the boundedness of the operator $e^{-itB}$ on modulation spaces from Equation \eqref{boundd}.

Thus, from the mean value theorem, it follows that for $0\leq t\leq T$ and some $\tau\in[0,t]$ we have the estimate
\begin{eqnarray}
\label{meanvalth}
|I(t)-I(0)| &\leq& T|I'(\tau)|\\ &\lesssim& T N^{\frac{\tilde{\alpha}(\alpha+1)}2}\Big(N^{\frac{\tilde{\alpha}(\alpha+1)(\alpha-1)}2-1}+N^{-\alpha}\Big) \nonumber \\  &=& T\left(\frac{1}{N^{1-\frac{\tilde{\alpha}(\alpha+1)\alpha}{2}}}+\frac{1}{N^{\alpha-\frac{\tilde{\alpha}(\alpha+1)}{2}}}\right).  \nonumber
\end{eqnarray}
For \eqref{Hamfort} to be true it suffices that the last expression of \eqref{meanvalth} satisfies 
\begin{equation}
\label{finaltruebound}
T\left(\frac{1}{N^{1+\frac{\tilde{\alpha}(\alpha+1)(2-\alpha)}{2}}}+\frac1{N^{\alpha+\frac{\tilde{\alpha}(\alpha+1)}{2}}}\right)\lesssim1.
\end{equation}
Since $N$ is going to be large we want that both exponents of $N$ in the last expression are positive. Therefore, we require
\begin{equation}
\label{alphareq}
\frac{\tilde{\alpha}(\alpha+1)(\alpha-2)}{2}<1.
\end{equation}
The last condition is satisfied due to the assumption $p\in\left(2,p_{\text{max}}\right)$. Also, it is straightforward to see that 
\begin{equation}
\label{estimaat4}
1+\frac{\tilde{\alpha}(\alpha+1)(2-\alpha)}{2}\leq\alpha+\frac{\tilde{\alpha}(\alpha+1)}{2}.
\end{equation}
Hence, \eqref{Hamfort} holds for $T\sim N^{1+\frac{\tilde{\alpha}(\alpha+1)(2-\alpha)}{2}}$ and by putting everything together we obtain for $0\leq t\leq T$
\begin{equation}
\label{finalestii}
\|v(t)-e^{-itB}v(0)\|_{H^{1}(\R^d)}<I(t)^{\frac12}+\|\Phi_{N}\|_{H^{1}(\R^d)}\lesssim N^{\frac{\tilde{\alpha}(\alpha+1)}{2}}\sim T^{\frac{\tilde{\alpha}(\alpha+1)}{2+\tilde{\alpha}(\alpha+1)(2-\alpha)}}
\end{equation}
or in other words
\begin{equation}
\label{finn74}
\|(u(t),u_{t}(t))-\mathbb K(t)(u(0),u_{t}(0))\|_{H^{1}\times L^{2}}\lesssim (1+t)^{\frac{\tilde{\alpha}(\alpha+1)}{2+\tilde{\alpha}(\alpha+1)(2-\alpha)}}
\end{equation}
for all $t\in[0,T]$ which finishes the proof of Theorem \ref{maintheoremglo}.

\end{proof}

\textbf{Acknowledgments}:
Funded by the Deutsche Forschungsgemeinschaft (DFG, German Research
Foundation) – Project-ID 258734477 – SFB 1173.

The authors would like to thank Dirk Hundertmark and Peer Kunstmann from
KIT for their helpful comments and fruitful discussions.

\end{section}

{\let\thefootnote\relax\footnote{Current address of Leonid Chaichenets: Technical University of Dresden, Institute of Analysis, 01069 Dresden, Germany, email:\textbf{leonid.chaichenets@tu-dresden.de}}}


\begin{thebibliography}{00}

\bibitem{BOU}{\sc J. Bourgain}, {\em Global Solutions of Nonlinear Schr\"odinger Equations.} Colloquium Publications, Volume: 46, 1999, 182 pp,  MSC: Primary 35, Print ISBN: 978-0-8218-1919-7.

\bibitem{BRE}{\sc H. Brezis}, {\em Functional Analysis, Sobolev Spaces and Partial Differential Equations.} Universitext. Springer, New York, 2011, ISBN978-0-387-70913-0. https://doi.org/10.1007/978-0-387-70914-7.

\bibitem{BW}{\sc P. Brenner and W. von Wahl}, {\em Global classical solutions of nonlinear wave equations.} Math. Z. 176 (1981), 87-121.

\bibitem{CHA}{\sc L. Chaichenets}, {\em Modulation spaces and nonlinear Schr\"odinger equations.} Thesis, Karlsruhe Institute of Technology, 2018, https://doi.org/10.5445/IR/1000088173.

\bibitem{CHKP}{\sc L. Chaichenets, D. Hundertmark, P. Kunstmann and N. Pattakos}, {\em On the existence of global solutions of the one-dimensional cubic NLS for initial data in the modulation space $M_{p,q}(\R)$.} Jour. of Differ. Eq. Vol. 263, Issue 8, 15 October 2017, 4429-4441.

\bibitem{CN}{\sc E. Cordero and F. Nicola}, {\em Remarks on Fourier multipliers and applications to the wave equation.} J. Math. Anal. Appl. 353 (2009) 583-591.

\bibitem{FEI}{\sc H.G. Feichtinger}, {\em Modulation spaces on locally compact Abelian groups.} Technical Report, University of Vienna, 1983, in: Proc. Internat. Conf. on Wavelet and applications, 2002, New Delhi Allied Publishers, India (2003), 99-140.

\bibitem{GGr}{\sc D.-A. Geba and M. G. Grillakis}, {\em An introduction to the theory of wave maps and related geometric problems.} World Scientific Publishing Co. Pte. Ltd., Hackensack (2017).

\bibitem{GV1}{\sc J. Ginibre and G. Velo}, {\em The Global Cauchy Problem for the Non Linear Klein-Gordon Equation.} Mathematische Zeitschrift 189, 487-505 (1985).

\bibitem{GR1}{\sc M. G. Grillakis}, {\em Regularity and asymptotic behavior of the wave equation with a critical nonlinearity.} Ann. of Math. 132 (1990), 485-509.

\bibitem{GR2}{\sc M. G. Grillakis}, {\em Some remarks on the regularity of wave equations with critical non-linearity.} In: Nonlinear hyperbolic equations and field theory. Eds.: M.K.V. Murthy and S. Spagnolo (Pitman Research Notes in Mathematics Series, Vol. 253) (1992),
110-120.

\bibitem{GR3}{\sc M. G. Grillakis}, {\em Regularity for the wave equation with a critical nonlinearity.} Comm. Pure Appl. Math. 45 (1992), 749-774.

\bibitem{KAP1}{\sc L. V. Kapitanski}, {\em The Cauchy problem for a semilinear wave equation, II.} J. Soviet Math. 62 (1992), no. 3, 2746-2777.

\bibitem{KAP2}{\sc L. V. Kapitanski}, {\em The Cauchy problem for a semilinear wave equation, III.} J. Soviet Math. 62 (1992), no. 2, 2619-2645.

\bibitem{KT}{\sc M. A. Keel and T. Tao}, {\em Endpoint Strichartz estimates.} American Journal of Mathematics, Volume 120, Number 5, October 1998, pp. 955-980.


\bibitem{OV}{\sc E. Y. Ovcharov}, {\em Global regularity of nonlinear dispersive equations and Strichartz estimates.} PhD Thesis, University of Edinburgh, July 2009.

\bibitem{HP}{\sc H. Pecher}, {\em $L^p$-Absch\"atzungen und klassische L\"osungen f\"ur nichtlineare Wellengleichungen, I.} Math. Z. 150 (1976), 159-183.


\bibitem{SEG}{\sc I. Segal}, {\em Non-Linear Semi-Groups.} Annals of Mathematics, Vol. 78, No. 2, September 1963. 

\bibitem{MS1}{\sc M. Struwe}, {\em Globally regular solutions to the $u^5$ Klein-Gordon equation.} Ann. Sc. Norm. Sup. Pisa (Ser.4) 15 (1988), 495-513.

\bibitem{MS2}{\sc M. Struwe}, {\em Semilinear wave equations.} Bull. AMS 26 (1992), 53–85.

\bibitem{TRTR}{\sc H. Triebel}, {\em Interpolation Theory, Function Spaces, Differential Operators.} Vol. 18 of North Holland Mathematical Library, 1978, ISBN 0-7204-0710-9.

\bibitem{BH}{\sc B. X. Wang and H. Hudzik}, {\em The global Cauchy problem for the NLS and NLKG with small rough data.} J. Differ. Equations 232 (2007), 36-73.

\bibitem{BLB}{\sc B. X. Wang, L. Zhao and B. Guo}, {\em Isometric decomposition operators, function spaces $E^{\lambda}_{p,q}$ and applications to nonlinear evolution equations.} J. Funct. Anal. 233 (2006) 1-39.

\bibitem{ZZ}{\sc J. Zhang and J. Zheng}, {\em Strichartz Estimate and Nonlinear Klein-Gordon Equation on Nontrapping Scattering Space.} The Journal of Geometric Analysis https://doi.org/10.1007/s12220-018-00100-3.

\end{thebibliography}
\end{document}